\newcommand{\bone}[1]{\mathbbm{1}_{\{ #1 \}}}
\newcommand{\bb}[1]{\mathbb{#1}}
\newcommand{\be}{\begin{equs}}
\newcommand{\ee}{\end{equs}}
\newcommand{\wh}[1]{\widehat{#1}}
\newcommand{\mc}[1]{\mathcal{#1}}
\DeclareMathOperator{\No}{No}
\DeclareMathOperator{\SE}{SE}
\renewcommand{\P}{\mathbb P}
\DeclareMathOperator{\opt}{opt}
\DeclareMathOperator{\Ca}{C}
\DeclareMathOperator{\uniform}{U}
\theoremstyle{plain}
\newtheorem{definition}{Definition}
\newtheorem{theorem}{Theorem}
\begin{document}

\title{A Decision Theoretic Approach to A/B Testing}
\author{ David Goldberg \\ eBay \and James E. Johndrow \\ Stanford University}

\maketitle

\begin{abstract}
A/B testing is ubiquitous within the machine learning and data science 
operations of internet companies. Generically, the idea is to perform a 
statistical test of the hypothesis that a new feature is better than the 
existing platform---for example, it results in higher revenue. If the p value 
for the test is below some pre-defined threshold---often, 0.05---the new feature 
is implemented. The difficulty of choosing an appropriate threshold has 
been noted before, particularly because dependent tests are often done sequentially, leading 
some to propose control of the false discovery 
rate (FDR) \cite{pekelis2015new,correia2016bayesian,javanmard2016online} rather than use of a single, universal threshold. 
However, it is still necessary to make an arbitrary choice of the level at which to control FDR.
Here we 
suggest a decision-theoretic approach to determining whether to adopt a new 
feature, which enables automated selection of an appropriate threshold. Our method has the basic 
ingredients of any decision-theory problem: a loss function, action space, and a 
notion of optimality, for which we choose Bayes risk. However, the loss function 
and the action space differ from the typical choices made in the literature, 
which has focused on the theory of point estimation. We give some basic results 
for Bayes-optimal thresholding rules for the feature adoption decision, and 
give some examples using eBay data. The results suggest that the 0.05 p-value 
threshold may be too conservative in some settings, but that its widespread use 
may reflect an ad-hoc means of controlling multiplicity in the common case of repeatedly 
testing variants of an experiment when the threshold is not reached. 
\end{abstract}

\section{Basic A/B Testing Problem}
In A/B testing, one has a proposed new version of a software platform and wants to 
decide whether or not to ship the new version. The classical way of conceiving 
of this problem is the following. We divide users into two groups: treatment and 
control. We then roll out the proposed update to the treatment group while 
leaving the control group with the current version. Using data gathered from 
this randomized trial, we then ask whether the new version performed 
``better'' with respect to some metric. For the purposes of grounding the 
discussion, we assume that the metric is revenue, which at eBay is 
roughly equivalent to Gross Merchandise Bought (GMB).

The  literature on A/B testing has considered several aspects of this problem, 
ranging from sequential testing issues \cite{johari2015always,deng2016continuous,deng2015objective}, to study of 
multi-armed bandits that approximately characterize some applications like search 
engine optimization and page customization 
\cite{scott2010modern,scott2015multi,perchet2013multi}, to practical and 
computational issues 
\cite{kohavi2009controlled,kohavi2013online,kohavi2007practical}. In e-commerce, 
tests often need to run for several weeks, so it is usually not practical to 
keep multiple competing versions of the platform active over a period of time in 
order to pursue an explore and exploit strategy. Accordingly, the traditional 
A/B testing framework in which a decision is made after every experiment is favored. 
Sequential testing and multilevel hierarchical dependence structures among experiments
are issues in experimentation at eBay, and we return to this in Section \ref{sec:Hierarchical}.

The traditional or generic view is to treat the feature adoption decision like 
a one-sided hypothesis testing problem. The relevant hypothesis is
\be \label{eq:hypothesis}
H_0: \text{current version is better}.
\ee
A  very simple setup in which to consider this is to let $\theta_0$ be the 
revenue per user in the control group and $\theta_1$ the revenue per user in the 
treatment group, so that 
so that $\Delta = 100(\theta_1 - \theta_0)/\theta_0$ is the \emph{lift}, the 
percentage change in revenue relative to the control group. On observing 
data 
\be
x \sim F(x \mid \theta)
\ee
from the treatment ($\theta = \theta_0$) and control ($\theta = \theta_1$) 
groups, we use some procedure, which we leave abstract, to obtain an 
estimate $\wh \Delta$. We then assume that, at least approximately,
\be
\frac{\wh{\Delta}-\Delta}{\SE(\wh{\Delta})} \sim t_{\nu},
\ee
where $t_\nu$ is a $t$ distribution with $\nu$ degrees of freedom, $\SE(\wh 
\Delta)$ is the standard error of $\wh \Delta$, and $\nu$ is known. Now, letting
\be
T(x) = \frac{\wh{\Delta}}{\SE(\wh \Delta)}
\ee
we compute the tail probability under repeated sampling
\be \label{eq:pvalue}
p = \bb P[T(X) > T(x) \mid H_0],
\ee
the one-tailed $p$ value. We then threshold the $p$ value at some level -- typically, 
0.05 -- and decide to ship if the $p$ value is smaller than the threshold.

\section{A Decision Theoretic Perspective}
An alternative way to look at A/B testing is as a decision theory problem
rather than an inference problem. That is, our primary goal is not to validate 
or invalidate the scientific hypothesis in \eqref{eq:hypothesis}, but to maximize revenue for the company.
In decision theory, we have an \emph{action space} $\mc A$ consisting of all of 
the possible decisions we can make, and a \emph{loss function} $L(\theta,a)$ 
which defines what we lose if the true state of nature is $\theta$ and we decide 
to take action $a \in \mc A$. For A/B testing, the action space only has two 
elements: ``ship'' and ``don't ship.'' The obvious loss function is
\be \label{eq:loss}
L(\Delta,a) = -a \Delta
\ee
where $a = \bone{\textnormal{ship}}$.  That is, if we choose to ship and the 
true lift is positive, then we gain the lift (equivalently, we lose the negative of the 
lift). Otherwise we lose zero; we just get business as usual GMB. Note that our 
decision rule $a$ is something that we do upon observing data, so $a = 
\delta(x)$ is a map from the sample space $\mc X$ into $\mc A$. We emphasize 
that the loss function in \eqref{eq:loss} is unusual in the 
literature, which focuses on loss functions like squared error and continuous action space, and 
thus the results we derive here are somewhat nonstandard.

The aim of decision theory is to choose an optimal decision rule. The 
frequentist perspective on decision rule optimality is to compute the expected 
loss if we use $\delta(X)$ in repeat sampling.\footnote{Throughout we use the 
standard 
convention of denoting random variables by upper case Roman letters and their realizations
by lower case Roman letters.} This is called the risk
\begin{definition}[risk]
The \emph{risk function} of a decision rule $\delta$ is defined as
\be
R(\theta,\delta) = \bb E_{\theta}[L(\theta,\delta(X))] = \int 
L(\theta,\delta(x)) dF(x \mid \theta),
\ee
the expectation of the loss over the sampling distribution of the data 
conditioned on $\theta$.
\end{definition} 

The risk \emph{conditions on} $\theta$,  the unknown state of nature. 
Since $\theta$ is unknown, we seek a decision rule that performs well 
no matter the true value of $\theta$. There are several ways to formalize this. 
We focus on the \emph{Bayes risk}
\begin{definition}[Bayes risk]
The Bayes risk of a decision rule $\delta$ is defined as the prior expectation of the risk
\be
r(\pi,\delta) = \bb E_{\pi}[R(\theta,\delta)] = \bb E_{\pi} [\bb E_{F(x \mid \theta)}[L(\theta,\delta(x))]],
\ee
where $\bb E_\pi[f(\theta)] = \int f(\theta) \pi(d\theta)$ is the expectation of $f$ with respect to $\pi$. 
\end{definition}
A decision rule is considered Bayes optimal if it minimizes the Bayes risk. 
Thus, Bayes risk deals with the fact that $\theta$ is unknown by \emph{weighting the states of nature} by 
our prior beliefs about their plausibility. In the applications that follow, we 
will take an empirical Bayes approach, where we estimate $\pi$ from the data.

With this basic idea in hand, we can consider the set of all decision rules for  
A/B testing that correspond to thresholding a $p$-value and derive the risk 
function. Suppose the true value of the lift is $\Delta$ and we define 
$\delta(x) = \bone{p(x) < \alpha}$. To simplify calculations, we initially consider a 
simpler version of the A/B testing problem, where $x$ is a noisy observation of 
the unknown lift with known variance
\be \label{eq:ObsData}
x \sim \No(\Delta,\sigma^2),
\ee
in lieu of the $t$ distribution, which arises when $\sigma^2$ is unknown. In this case
\be
T(x) = \frac{x}{\sigma} \sim \No\left( \frac{\Delta}{\sigma},1 \right),
\ee
the $p$ value is
\be
p(x) = \bb P[T(X) > T(x) \mid H_0] = 1-\Phi(x/\sigma),
\ee
and the decision rule is given by $\delta(x) = \bone{p(x) < \alpha}$, 
so we can redefine
\be \label{eq:ThreshDelta}
\delta(x) = \bone{\frac{x}{\sigma} > \frac{\beta}{\sigma}} =  \bone{x>\beta}
\ee
and compute the risk in the Gaussian case as
\be
R(\Delta, \delta) &= - \Delta \int \bone{x > \beta} dF(x \mid \Delta) \\
&= - \Delta \int \bone{\frac{x-\Delta}{\sigma} > \frac{\beta-\Delta}{\sigma}} dF(x \mid \Delta) \\
&= - \Delta \P_{F(x \mid \Delta)}\left[ \frac{X-\Delta}{\sigma} > \frac{\beta-\Delta}{\sigma} \right] \\
&= - \Delta \left[1-\Phi\left( \frac{\beta-\Delta}{\sigma} \right) \right] \\
&= - \Delta \Phi\left( \frac{\Delta-\beta}{\sigma} \right).
\ee

In fact, if \eqref{eq:ObsData} had been any location-scale family with a density symmetric about the location, 
\be \label{eq:LSGeneral}
x \sim F(x;\Delta,\sigma)
\ee
we would have obtained
\be \label{eq:LSRisk}
R(\Delta, \delta) &= - \Delta F\left( \frac{\Delta-\beta}{\sigma} \right),
\ee
with $F$ the CDF of the member of the location-scale family with location 0 and scale 1. Moreover, if we had replaced $\frac{x}{\sigma}$ with the statistic 
\be
T(x) = \frac{\widehat \Delta}{\SE(\widehat \Delta)}, 
\ee
we would still have obtained this representation, since $T(x)$ has a $t$, which a location-scale family with a density that is symmetric about the location. We will therefore mainly consider the case where $F(x\mid \theta)$ is Gaussian, with the understanding that the approach extends to other location-scale families. 

Having defined the risk function, we consider the Bayes risk. Suppose the lifts are exchangeable realizations of a random variable, so that
\be \label{eq:Prior}
\Delta_i \stackrel{iid}{\sim} \pi(\Delta_i;\eta),
\ee
where $\eta$ are the prior hyperparameters. For example, $\pi$ could be a normal distribution with parameters $\eta = (\mu,\tau^2)$. A general expression for the Bayes risk of any thresholding decision rule is
\be
r(\pi,\delta) &= \bb E_{\pi} [ \bb E_{F(x \mid \Delta)}[ -\delta(x) \Delta]] \\
&= -\bb E_{\pi} [\Delta \bb E_{F(x \mid \Delta)}[\bone{x>\beta}]] \\
&= -\bb E_{\pi}[\Delta \bb P_{F(x \mid \Delta)}[X>\beta]] \\
&= -\bb E_{\pi}[\Delta (1-F(\beta \mid \Delta))],
\ee
In the sequel, we estimate $\pi$ from eBay data and obtain some explicit expressions for $r(\pi,\delta)$.


\section{Bayes Risk of Thresholding Rules}
We now return to the class of decision rules in \eqref{eq:ThreshDelta} that thresholds at $\beta$ a statistic that is distributed according to a location-scale family, with risk given in \eqref{eq:LSRisk}. If we knew $F$ in \eqref{eq:LSGeneral} and $\pi$ in \eqref{eq:Prior}, we could optimize the Bayes risk over $\beta$ to determine the Bayes-optimal strategy for deciding whether to ship a proposed update to the platform. In many applications, we are willing to assume that $F$ is $t$ or normal. This is particularly true in A/B testing applications in industry, where $x$ is typically the (normalized) difference of means from two quite large populations. At eBay, a somewhat more sophisticated procedure is used to estimate the lift, but the estimator is then assumed to be approximately Gaussian for hypothesis testing purposes. We make the same assumption here. We then model the true lifts as student $t$ with unknown location $\mu$, scale $\tau$, and degrees of freedom $\nu$ so that
\be \label{eq:model}
x_i \mid \Delta_i, \sigma^2_i &\sim \No(\Delta_i, \sigma^2_i), \\
\Delta_i &\stackrel{iid}{\sim} t_{\nu}(\mu,\tau)
\ee
is a hierarchical Bayesian specification of the process generating the data, where we have selected $t_{\nu}$ for $\pi$. Here $\sigma^2_i$ is the estimated standard error of $x_i$, which we take to be known, and $t_{\nu}(\mu,\tau)$ denotes a three-parameter student $t$ distribution with density
\be
p(\Delta; \mu, \tau,\nu) = \frac{\Gamma(\frac{\nu+1}2)}{\Gamma(\frac{\nu}2) 
\sqrt{\pi \nu \tau^2}} \left(1 + \frac1{\nu} \left( \frac{x-\mu}{\tau} \right)^2 
\right)^{-\frac{\nu + 1}{2}},
\ee
a location-scale family with location $\mu$ and scale $\tau$.

We fit the model in \eqref{eq:model} using Markov chain Monte Carlo (MCMC) 
implemented in the \texttt{Stan} environment with the \texttt{rstan} package 
\cite{rstan} for \texttt{R}. The data $(x_i,\sigma_i)$ for $i=1,\ldots,n$ are 
historical lift estimates and corresponding standard errors from A/B tests 
performed at eBay during the year 2016. Histograms of the $x_i$ and $\sigma_i$ 
are shown in the top and center panels of Figure \ref{fig:eda}. The $x_i$ are 
centered near zero and the distribution is apparently symmetric, but the tails 
are considerably heavier than Gaussian. The distribution of standard errors has 
a mode of approximately 0.3, with some values as large as 2.

\begin{figure}
\centering
\begin{tabular}{cc}
\includegraphics[width=0.5\textwidth]{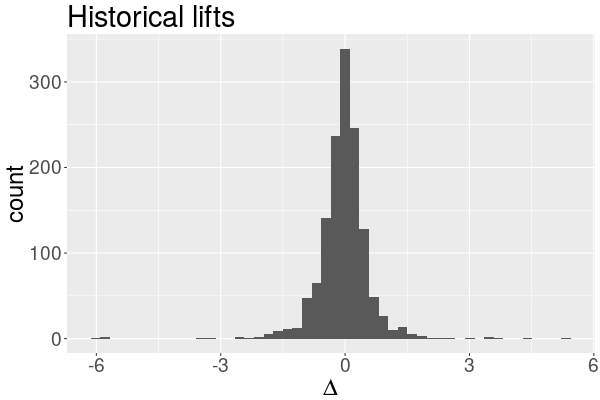} & \includegraphics[width=0.5\textwidth]{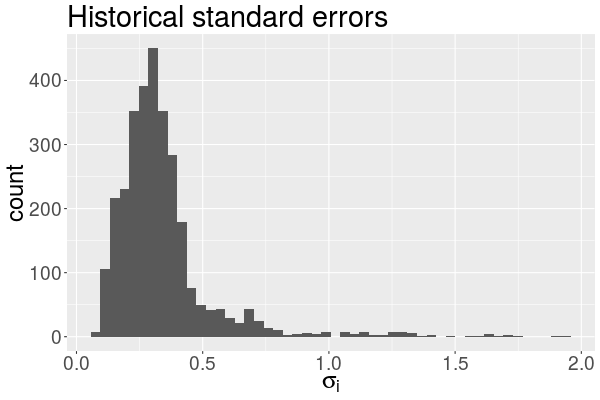} \\
\end{tabular}
\caption{Left: histogram of historical lifts. Right: histogram of historical standard errors.} \label{fig:eda}
\end{figure}

For priors, we put $\mu \sim \No(0,100)$, $\nu \sim \uniform(1.1,4)$, and 
$\tau \sim \Ca_+(0,1)$, the standard Cauchy distribution. The prior on $\mu$ is 
a default, rather vague, prior on a location parameter, and the prior on $\tau$ 
is recommended as a prior on variance components in hierarchical models by 
\citet{gelman2006prior} and  \citet{polson2012half}. The support of the uniform 
prior on $\nu$ is rather informative and was chosen based on preliminary 
analysis using Quantile-Quantile (Q-Q) plots. 

We run MCMC in \texttt{Stan} for 5,000 iterations, discarding 2,500 iterations 
as burn-in. This resulted in estimates of $\widehat \nu = 2.31$, $\widehat \mu 
= -0.02$, and $\widehat \tau = 0.18$; these estimates are the posterior 
mean estimate of these parameters.  
A Q-Q plot of the empirical quantiles of $x_i$ versus the fitted quantiles of 
$x_i$ in the model in \eqref{eq:model} with $(\nu,\mu,\tau) = (\widehat \nu, 
\widehat \mu, \widehat \tau)$ is shown in Figure \ref{fig:QQ}. 

In all of the analysis that follows, we use the posterior mean estimates $(\widehat \nu, \widehat \mu, \widehat \tau)$ to make a plug-in estimate of $\pi$. This is somewhat nonstandard in that we use a fully Bayesian procedure to estimate the parameters of $\pi$, but then follow an empirical Bayes approach to the rest of the analysis by fixing these parameters at the estimated posterior means. In other words, we use the Bayes machinery and MCMC simply to obtain lightly regularized point estimates of the parameters of $\pi$, in lieu of a more traditional non-regularized type II maximum likelihood approach. Experience with fitting $t$ distributions with unknown degrees of freedom to data suggests that some regulatization in our setting is wise.


\begin{figure}
\centering
 \includegraphics[width=0.6\textwidth]{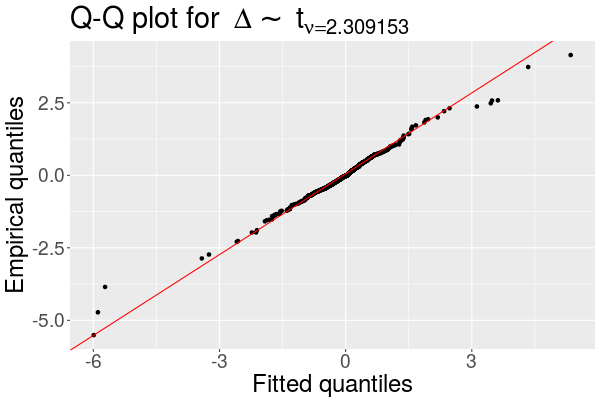}   
\caption{Q-Q plot of empirical quantiles of $x_i$ against fitted quantiles from the model in \eqref{eq:model} with $\Delta_i \stackrel{iid}{\sim} t_{2.35}(-0.02,0.18)$} \label{fig:QQ}
\end{figure}

We now approximate the Bayes-optimal threshold $\beta$ by numerically estimating the Bayes risk for the thresholding decision rule class $\delta(x) = \bone{x > \beta}$ for a grid of $\beta$ values. The optimal $\beta$ will most likely depend on $\sigma_i$, so we perform an initial analysis at $\sigma = 0.30$,\footnote{throughout, numbers reported in the text are rounded to two decimal places.} the median of the $\sigma_i$ in the data, then determine the optimal value of $\beta$ as a function of $\sigma$ for $0 < \sigma < 2$, yielding an optimal decision rule for any of the experiments conducted at eBay in 2016.

A plot of Bayes risk as a function of $\beta$ with $\sigma = 0.3$ is shown in 
Figure \ref{fig:BayesNormal}. The optimal value of $\beta$, which we will denote $\beta_{\opt}$, is $0.04$, which 
corresponds to a $p$-value threshold for the one-tailed test of $H_0$ in 
\eqref{eq:pvalue} of 0.45, with $p = 1-\Phi(\beta/\sigma)$, much less conservative than the default value of 
0.05 used in A/B testing. For most of the $\beta$ values considered, the Bayes 
risk is negative. This is significant, since the Bayes risk for the decision 
rule $\delta(x) = 0$ is zero; this corresponds to the limit as $\beta \to 
\infty$, so it must be the case that the risk converges to 0 as $\beta \to 
\infty$, consistent with the appearance of Figure \ref{fig:BayesNormal}. The 
optimal $\beta$ value of 0.04 is close to zero but not exactly zero. Recall that 
$\hat \tau = -0.02$, which is also the expectation of $\Delta_i$ in the fitted 
model. It is not a coincidence that this is slightly less than zero, and the 
optimal threshold is slightly greater than zero, as we will see theoretically 
for the case where $\pi$ is Gaussian in the next section. 

\begin{figure}
\centering
\includegraphics[width=0.6\textwidth]{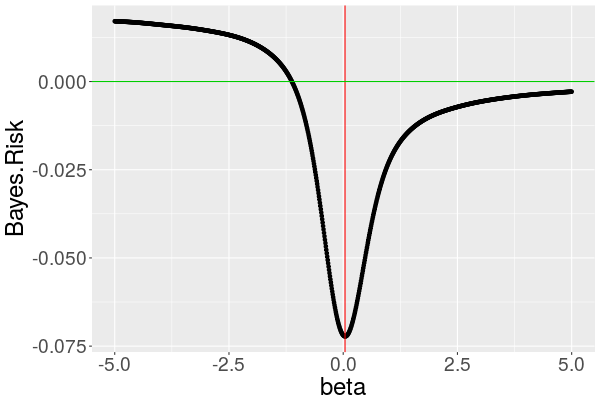}
\caption{Bayes risk vs $\beta$ for the model in \eqref{eq:model} using $\sigma_i = 0.3$ and the estimated values of $\nu, \mu, \tau$. The vertical red line shows the minimum value of $\beta$, the horizontal green line indicates zero risk.} \label{fig:BayesNormal}
\end{figure}

To obtain a value of $\beta_{\opt}$ for every case in the historical data, we need to estimate $\beta_{\opt}$ as a function of $\sigma$. To do this, we compute the Bayes risk over a two-dimensional fine grid of $(\beta, \sigma)$ values, then obtain $\beta_{\opt}(\sigma)$, the value of $\beta$ that minimizes the Bayes risk for every value of $\sigma$ considered. The results are shown in Figure \ref{fig:BetaSigma}. Clearly, the risk increases in $\sigma$, and $\beta_{\opt}(\sigma)$ also increases in $\sigma$ -- equivalently, the optimal $p$ value threshold decreases in $\sigma$ -- at least over the range of $\sigma$ values encountered in the data. Intuitively, this makes sense. If our observations $x_i$ of the true lift are very noisy, then we require a larger value of $x_i$ to have convincing evidence that the lift is indeed positive. 

\begin{figure}
\centering
\begin{tabular}{cc}
\includegraphics[width=0.5\textwidth]{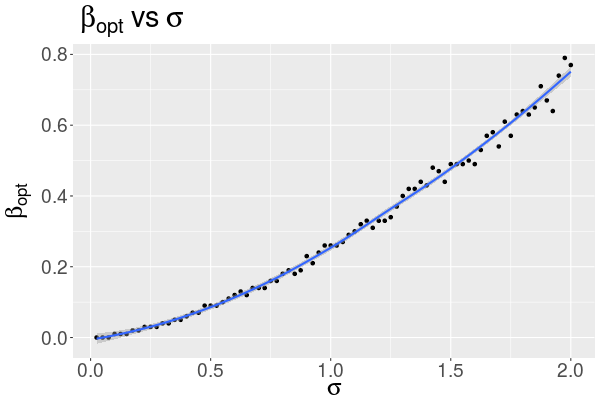} &
\includegraphics[width=0.5\textwidth]{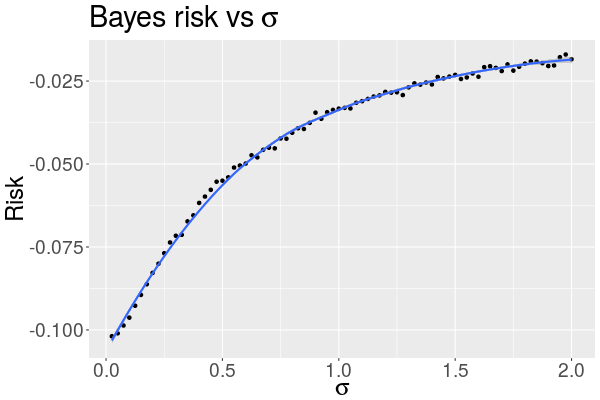}
\end{tabular}
\caption{Top: $\beta_{\opt}(\sigma)$. Bottom: Bayes risk vs $\sigma$ for 
$\beta_{\opt}(\sigma)$ for the model in \eqref{eq:model} at the estimated values 
of $\nu, \mu, \tau$. The blue lines show a local linear smooth.} 
\label{fig:BetaSigma}
\end{figure}

\section{Theoretical Results}
Considering still the case of the loss function in \eqref{eq:loss} with $F$ is a location-scale family CDF and frequentist risk given by \eqref{eq:LSRisk}, we now derive some simple results under the assumption that $\Delta \sim \pi(\Delta;\eta)$ with $\pi$ also a location-scale family, with both having densities symmetric about the location. This covers the examples in the previous section, and is arguably the most common type of model that would arise in applied settings. We have the following general result.
\begin{theorem}
Suppose $F$ is the distribution function of a location-scale family with a density $f$ that is symmetric about the location, and $\pi$ is the density of a location-scale family also symmetric about the location. Then if $\mathbb E_{\pi}(\Delta) = 0$, $\beta=0$ is a critical point of the Bayes risk.
\end{theorem}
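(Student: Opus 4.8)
The plan is to differentiate the Bayes risk with respect to $\beta$ and show the derivative vanishes at $\beta=0$ by a parity argument. Starting from the location-scale risk in \eqref{eq:LSRisk} and integrating against the prior,
\be
r(\pi,\delta) = -\int \Delta\, F\!\left( \frac{\Delta-\beta}{\sigma} \right) \pi(\Delta)\, d\Delta.
\ee
Because $\pi$ is a location-scale density symmetric about its location and $\E_\pi(\Delta)=0$, the location parameter must equal $0$, so $\pi(\Delta)=\pi(-\Delta)$; likewise the standardized sampling density $f=F'$ satisfies $f(-u)=f(u)$. These two symmetries are essentially the whole content of the proof.

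First I would justify differentiating under the integral sign. Since $\partial_\beta F\big(\tfrac{\Delta-\beta}{\sigma}\big) = -\sigma^{-1} f\big(\tfrac{\Delta-\beta}{\sigma}\big)$, and $f$ is bounded for the $t$ and Gaussian families the paper works with, the $\beta$-derivative of the integrand is dominated near $\beta=0$ by a constant times $|\Delta|$, which is $\pi$-integrable precisely because $\E_\pi(\Delta)$ exists. Dominated convergence then gives
\be
\frac{\partial}{\partial \beta}\, r(\pi,\delta) = \frac{1}{\sigma}\int \Delta\, f\!\left( \frac{\Delta-\beta}{\sigma} \right) \pi(\Delta)\, d\Delta.
\ee

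Then I would set $\beta=0$ and note that the integrand $h(\Delta) = \Delta\, f(\Delta/\sigma)\,\pi(\Delta)$ is \emph{odd}, since $h(-\Delta) = -\Delta\, f(-\Delta/\sigma)\,\pi(-\Delta) = -\Delta\, f(\Delta/\sigma)\,\pi(\Delta) = -h(\Delta)$ by the two symmetries above. As $h$ is also absolutely integrable (same envelope as before), $\int_{\mathbb R} h(\Delta)\, d\Delta = 0$, hence $\partial_\beta r(\pi,\delta)\big|_{\beta=0}=0$ and $\beta=0$ is a critical point.

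I expect the only delicate point to be the interchange of differentiation and integration, which needs a mild integrability/boundedness condition on $f$ beyond what the statement literally posits; it is there that the moment hypothesis does double duty, both pinning the prior location at $0$ and controlling the tail of the integrand, so a prior with no first moment (e.g.\ Cauchy) is genuinely outside the scope. I would also remark that the argument establishes only stationarity, not that $\beta=0$ is a minimum, and says nothing about the second-order behavior that governs how $\beta_{\opt}$ moves away from $0$ when $\E_\pi(\Delta)\neq 0$; those would be handled by a separate computation.
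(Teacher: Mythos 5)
Your argument is correct and is essentially the paper's own proof: differentiate the Bayes risk in $\beta$ under the integral, evaluate at $\beta=0$, and observe that the integrand $\Delta\, f(\Delta/\sigma)\,\pi(\Delta)$ is odd by the two symmetry assumptions, so the derivative vanishes. The only addition is your explicit dominated-convergence justification of the interchange (and the remark that only stationarity is shown), which the paper leaves implicit.
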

\begin{proof}
We have
\be
\mathbb E_{\pi}[R(\Delta,\delta)] = \int -\Delta F\left( 
\frac{\Delta-\beta}{\sigma} \right) \frac1{\tau} \pi_0 \left( 
\frac{\Delta}{\tau} \right) d\Delta
\ee
where $\pi_0$ is the density of the standard member of the location-scale family. We have
\be
\frac{\partial}{\partial \beta} \mathbb E_{\pi}[R(\Delta,\delta)] &=  \int 
\frac{\partial}{\partial \beta}  -\Delta F\left( \frac{\Delta-\beta}{\sigma} 
\right) \frac1{\tau} \pi_0 \left( \frac{\Delta}{\tau} \right) d\Delta \\
&= \int \frac{\Delta}{\tau \sigma} f\left( \frac{\Delta-\beta}{\sigma} \right) 
\pi_0 \left( \frac{\Delta}{\tau} \right) d\Delta.
\ee
Observe that
\be
\frac{\partial}{\partial \beta} \mathbb E_{\pi}[R(\Delta,\delta)]\, 
\bigg|_{\beta = 0}  = \int \frac{\Delta}{\tau \sigma} f\left( 
\frac{\Delta}{\sigma} \right) \pi_0 \left( \frac{\Delta}{\tau} \right) d\Delta = 
0,
\ee
since the integrand is symmetric about zero, so $\beta = 0$ is always a critical point of the Bayes risk. 
\end{proof}
If this critical point is unique, it follows that if there exists a unique minimizer of the Bayes risk, it must be $\beta = 0$. Put another way, in a ``generic'' setup of this problem, when experiments have on average zero lift, then the optimal cutoff to use is $\beta = 0$, corresponding to a $p$-value cutoff of 0.5. We now show that for the case where both $F$ and $\pi$ are Gaussian, the optimal $\beta$ can be obtained in closed form for any values of the parameters of $F$ and $\pi$.

\begin{theorem} \label{thm:OptimalGauss}
Suppose $F$ is Gaussian, and $\pi$ is the density of a $\No(\mu,\tau^2)$ random variable. Then
\be
\beta = \frac{-\mu \sigma^2}{\tau^2}
\ee 
minimizes the Bayes risk. 
\end{theorem}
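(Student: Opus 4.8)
The plan is to turn the minimization into a one-dimensional calculus problem by exploiting Gaussian conjugacy. I would begin from the Bayes risk of the thresholding rule $\delta(x)=\bone{x>\beta}$ computed above, namely
\be
r(\pi,\delta) = -\int \Delta\,\Phi\!\left(\frac{\Delta-\beta}{\sigma}\right)\frac1\tau\,\phi\!\left(\frac{\Delta-\mu}{\tau}\right)d\Delta .
\ee
First I would check that $\beta\mapsto r(\pi,\delta)$ is differentiable with the derivative obtained by passing $\partial_\beta$ inside the integral; this is legitimate because the integrand and its $\beta$-partial are dominated, uniformly for $\beta$ in a compact set, by an integrable function of $\Delta$ (the Gaussian factor in $\Delta$ decays faster than the polynomial $|\Delta|$ grows). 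By the same differentiation step as in the proof of the previous theorem (now with nonzero prior location $\mu$) this gives
\be
\frac{\partial}{\partial\beta}r(\pi,\delta) = \int \frac{\Delta}{\sigma\tau}\,\phi\!\left(\frac{\Delta-\beta}{\sigma}\right)\phi\!\left(\frac{\Delta-\mu}{\tau}\right)d\Delta .
\ee

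The key step is to evaluate this integral. I would view $\frac1\sigma\phi((\Delta-\beta)/\sigma)$ and $\frac1\tau\phi((\Delta-\mu)/\tau)$ as the $\No(\beta,\sigma^2)$ and $\No(\mu,\tau^2)$ densities in the variable $\Delta$ and apply the standard Gaussian product identity, which rewrites their product as $g(\beta)$ times a single Gaussian density in $\Delta$ with variance $s^2=(\sigma^{-2}+\tau^{-2})^{-1}$ and mean $m=s^2(\beta\sigma^{-2}+\mu\tau^{-2})$, where $g(\beta)>0$ is the $\No(\mu,\sigma^2+\tau^2)$ density evaluated at $\beta$. The integral above then collapses to $g(\beta)\,m$, a strictly positive factor times the ``posterior'' mean $m$. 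Since $m$ is an affine, strictly increasing function of $\beta$, it vanishes at the unique point $\beta=-\mu\sigma^2/\tau^2$, so this is the only critical point of the Bayes risk; moreover $\partial_\beta r$ is negative for smaller $\beta$ and positive for larger $\beta$, so $r$ decreases then increases and $\beta=-\mu\sigma^2/\tau^2$ is in fact the global minimizer.

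As a self-contained alternative that sidesteps differentiating under the integral, I would instead compute $r(\pi,\delta)$ in closed form. Writing $\Phi((\Delta-\beta)/\sigma)=\P(W\ge\beta\mid\Delta)$ with $W=\Delta-\sigma Z$ and $Z\sim\No(0,1)$ independent of $\Delta$, one has $r(\pi,\delta)=-\E[\Delta\,\bone{W\ge\beta}]$, and the truncated-moment identity for the jointly Gaussian pair $(\Delta,W)$ gives $r(\pi,\delta)=-\mu\,\Phi\!\big((\mu-\beta)/s'\big)-(\tau^2/s')\,\phi\!\big((\beta-\mu)/s'\big)$ with $s'=\sqrt{\sigma^2+\tau^2}$; differentiating this one-line expression and setting it to zero again yields $\mu\sigma^2+\tau^2\beta=0$. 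Either route is short; the only place I expect to need any care is the (routine) domination argument licensing differentiation under the integral sign, while the Gaussian-product algebra and the sign analysis that upgrades the critical point to a global minimum are both straightforward.
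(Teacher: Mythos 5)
Your proposal is correct and follows essentially the same route as the paper: differentiate the Bayes risk in $\beta$ under the integral sign, and your Gaussian-product evaluation of $\int \frac{\Delta}{\sigma\tau}\,\phi\!\left(\frac{\Delta-\beta}{\sigma}\right)\phi\!\left(\frac{\Delta-\mu}{\tau}\right)d\Delta = g(\beta)\,m(\beta)$ reproduces exactly the closed-form derivative the paper writes down, with the same unique zero at $\beta=-\mu\sigma^2/\tau^2$. Your way of upgrading the critical point to a global minimizer (positive factor times a strictly increasing affine function of $\beta$, so the derivative changes sign from negative to positive) is if anything cleaner than the paper's second-derivative check, and your alternative closed-form expression for the Bayes risk via the truncated first-moment identity also checks out, yielding the same equation $\mu\sigma^2+\tau^2\beta=0$.
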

\begin{proof}
The Bayes risk is
\be
\bb E_{\pi}[R(\Delta,\delta)] = \int -\Delta \Phi\left( \frac{\Delta-\beta}{\sigma} \right) \phi \left( \frac{\Delta-\mu}{\tau} \right) d\Delta
\ee
with $\phi$ the standard Gaussian density, so
\be
\frac{\partial}{\partial \beta} \bb E_{\pi}[R(\Delta,\delta)] &= \int \frac{\Delta}{\sigma} \phi\left( \frac{\Delta-\beta}{\sigma} \right) \phi \left( \frac{\Delta-\mu}{\tau} \right) d\Delta \\
&= \frac{(\mu \sigma^2 + \beta \tau^2)\tau}{\sqrt{2\pi} (\sigma^2+\tau^2)^{3/2}} e^{-\frac{(\beta-\mu)^2}{2 (\sigma^2+\tau^2)}}. 
\ee
Setting equal to zero and solving gives the unique solution
\be
\beta = \frac{-\mu \sigma^2}{\tau^2},
\ee
and noting that
\be
&\frac{\partial}{\partial \beta} \frac{(\mu \sigma^2 + \beta \tau^2)\tau}{\sqrt{2\pi} (\sigma^2+\tau^2)^{3/2}} e^{-\frac{(\beta-\mu)^2}{2 (\sigma^2+\tau^2)}}  
= e^{-\frac{(\beta-\mu)^2}{2 (\sigma^2 + \tau^2)}} \\ 
&\times \frac{\tau}{\sqrt{2 \pi} (\sigma^2 + \tau^2)^{3/2}} \left( \tau^2 - \frac{(\beta-\mu)(\mu \sigma^2 + \beta \tau^2)}{\sigma^2+\tau^2} \right)
\ee
which evaluated at $-\mu \sigma^2/\tau^2$ is
\be
\frac{\tau^3}{\sqrt{2 \pi} (\sigma^2+\tau^2)^{3/2}} e^{-\frac{\mu^2 (\sigma^2+\tau^2)}{2\tau^4}} > 0,
\ee
we conclude that $\beta = \frac{-\mu \sigma^2}{\tau^2}$ is the unique minimizer of the Bayes risk.
\end{proof}
This result is intuitive. The optimal cutoff is decreasing in the prior mean $\mu$. In other words, if most experiments tend to have large positive lifts, we become less conservative and accept proposed changes to the platform with weaker evidence that they are beneficial. The optimal threshold is also a linear function of the ratio $\sigma^2/\tau^2$ of the observation noise to the prior variance. Thus, when the observation noise is small relative to the variation in the true lifts, the optimal threshold is shrunk toward zero, meaning we accept an experiment with a small positive lift more readily than when the observation noise is large relative to $\tau^2$. This makes sense since in the former case we typically have smaller uncertainty about whether the true lift is positive than in the latter case.


Although we do not have a theoretical result for all $\mu, \tau$ for the model 
in \eqref{eq:model}, we can similarly evaluate the optimal $\beta$ empirically 
by fixing $\nu = 2.31$, $\sigma_i = 0.30$, and $\tau = 0.18$ and varying $\mu$. 
The resulting optimal $\beta$ value is shown in Figure 
\ref{fig:OptimalBeta}, along with the line $-\mu \sigma^2/\tau^2$ for comparison 
to the Gaussian case. Interestingly, in the region between -0.5 and 0.5, 
$\beta_{\opt}$ is a decreasing function of $\mu$, just as for the Gaussian, but the slope is
smaller than the $\sigma^2/\tau^2$ slope for the Gaussian prior. 
However, for larger or smaller values of $\mu$, $\beta_{\opt}$ moves back toward 
zero. 

An intuitive explanation of this phenomenon is that it is caused indirectly by the heavier tails of the prior relative to the (Gaussian) sampling model. When $|\mu| \ll \sigma$, $x$ and $\Delta$ will often have different signs, and thus the optimal threshold is an approximately linear function of the prior expectation of $\Delta$. We obtain relatively little information from $x$ and use more prior information in making the decision. When $|\mu| \gg \sigma$, an observed value of $x$ that is very far from $\mu$ most likely reflects a value of $\Delta$ that is very far from $\mu$, since outliers are much more common in the prior than in the sampling model. Thus, a threshold closer to zero makes sense, since variation in the prior swamps the observation noise. This is why the value of $\beta_{\opt}$ flattens around the value $|\mu| = 0.3 = \sigma$ and then moves back toward zero in Figure \ref{fig:OptimalBeta}.  

\begin{figure}
\centering
\includegraphics[width=0.5\textwidth]{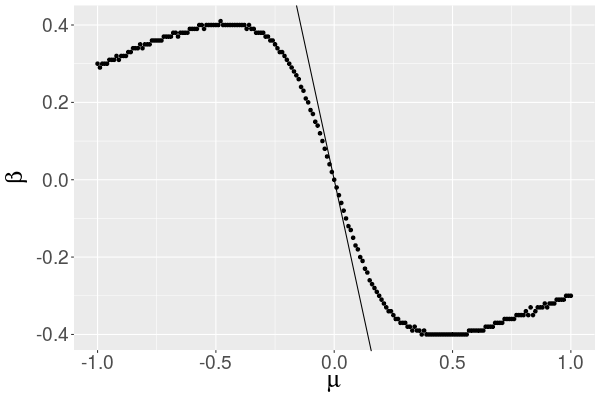} 
\caption{Bayes optimal $\beta$ vs $\mu$ with $\nu=2.31$, $\sigma=0.3$, and 
$\tau=0.18$ for model in \eqref{eq:model}.} \label{fig:OptimalBeta}
\end{figure}


\section{Hierarchical Structure of Experiments} \label{sec:Hierarchical}
We have until now ignored the fact that some experiments may be more related than others, opting for a simple hierarchical model. Often, if a feature is developed and is not selected after the first A/B test, the team that developed the feature will modify the algorithm and then re-test. This gives rise to sequences of closely related tests. If we treat all the observations $x_i$ as having means $\Delta_i$ that are iid from the random effect distribution, we are ignoring this structure in the data. 

The practice of modifying and re-testing may offer a partial explanation for the use of $0.05$ as a $p$-value threshold, which our analysis suggests is much too conservative when performing single, exchangeable experiments. Recall that we computed a Bayes-optimal threshold $\beta = 0.04$, corresponding to a one-tailed $p$ value of $0.45$. If instead of a single experiment yielding a single noisy measurement $x_i$ of the true lift $\Delta_i$, we performed $n_i$ experiments yielding $n_i$ noisy measurements of $\Delta_i$, a simple Bonferroni correction would indicate performing each test by thresholding the $p$ value at $0.45/n_i$, which is $0.05$ for $n_i = 9$. Figure \ref{fig:ecdf} shows the empirical distribution function (ECDF) for the number of replicate tests of each experiment conducted by eBay in 2016. The 95th percentile is 6, and the 99th percentile is 9. Thus, if we translate the optimal threshold for single tests into a Bonferroni-corrected $p$-value threshold for multiple tests, a threshold of 0.05 would be appropriate to uniformly control multiplicity for 99 percent of the features tested at eBay.

\begin{figure}
\centering
 \includegraphics[width=0.6\textwidth]{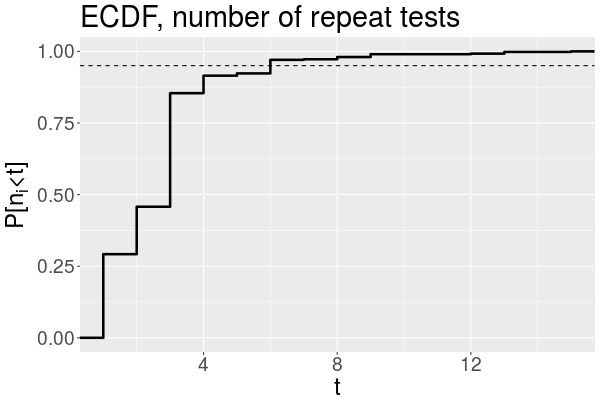}
 \caption{Empirical CDF of the number of experiments $n_i$ done for feature $i$ in 2016 across all of eBay. } \label{fig:ecdf}
\end{figure}

This simple analysis is unsatisfactory because it lapses back into the testing framework that we have sought to avoid. To extend the decision-theoretic approach to the case of repeated observations, we now analyze the Bayes risk in this setting. For simplicity, we consider the case where modifications to a feature after the first experiment have no affect on the true lift. 

If modifications have no effect, then instead of observing one noisy realization of the lift $x \sim F(x \mid \Delta)$, we observe $n_i$ noisy data points $x_{i1},\ldots,x_{i n_i}$ for each feature. Our decision rule $\delta(x)$ is now a function of $n_i$ many observations, so $\delta : \bb R^{n_i} \to \{0,1\}$. If, as before, $x_{ij} \stackrel{iid}{\sim} F(x;\Delta_i,\sigma^2_{ij})$, then 
\be \label{eq:WtdMean}
Y_i := \sum_{j=1}^{n_i} \frac{X_{ij}}{\sigma^2_{ij}} \stackrel{.}{\sim} \No\left( \frac{\Delta_i}{S_i}, \frac{1}{S_i} \right),
\ee 
where $X \stackrel{.}{\sim} \mathcal L$ indicates that the random variable $X$ approximately follows the law $\mathcal L$, and 
\be
S_i \equiv \left( \sum_{j} \frac1{\sigma^2_{ij}} \right)^{-1},
\ee
$n_i$ times the harmonic mean of the observation variances. The motivation to weight by the inverse variances will soon become apparent. Thus, if $\delta$ corresponds to thresholding the inverse variance-weighted sum, so that
\be
\delta(x) = \bone{y_i > \beta},
\ee 
then the risk is
\be
R(\Delta,\delta) &= -\Delta \int \bone{ y > \beta} F(y \mid \Delta, \sigma_1,\ldots,\sigma_n) dy \\
&= -\Delta \P[ Y > \beta] \\
&= -\Delta \P\left[ \frac{Y-\Delta/S}{1/\sqrt{S}} > \frac{\beta - \Delta/S}{1/\sqrt{S}} \right] \\
&= -\Delta \Phi\left( \frac{\Delta/S-\beta}{1/\sqrt{S}} \right),
\ee
where we have dropped subscripts above to simplify notation. Now we derive the Bayes risk in the case where $\Delta \sim \No(\mu,\tau^2)$.
\begin{theorem}
Suppose $\delta(x) = \bone{y>\beta}$, and $\pi$ is the density of a $\No(\mu,\tau^2)$ random variable. Then
\be
\beta = \frac{-\mu}{\tau^2}
\ee 
minimizes the Bayes risk. 
\end{theorem}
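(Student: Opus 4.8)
The plan is to reduce this to Theorem \ref{thm:OptimalGauss} by a change of variables rather than redo a Gaussian integral from scratch. Starting from the risk derived immediately above the statement, $R(\Delta,\delta) = -\Delta\,\Phi\!\left( (\Delta/S - \beta)/(1/\sqrt{S}) \right)$, I would first rewrite the argument of $\Phi$ as
\be
\frac{\Delta/S - \beta}{1/\sqrt{S}} = \frac{\Delta - \beta S}{\sqrt{S}}.
\ee
With this rewriting, $R(\Delta,\delta)$ has exactly the functional form of the frequentist risk appearing in the proof of Theorem \ref{thm:OptimalGauss}, with the sampling scale $\sigma$ there replaced by $\sqrt{S}$ and the threshold $\beta$ there replaced by $\beta S$. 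Since $\pi$ is $\No(\mu,\tau^2)$ in both settings, the Bayes risk $\bb E_{\pi}[R(\Delta,\delta)]$ is, after the substitution $(\sigma,\beta)\mapsto(\sqrt{S},\,\beta S)$, literally the object already analyzed in Theorem \ref{thm:OptimalGauss}.

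Next I would invoke Theorem \ref{thm:OptimalGauss}: the Bayes risk in that parametrization is uniquely minimized when the (substituted) threshold equals $-\mu\sigma^2/\tau^2$, which here reads $\beta S = -\mu S/\tau^2$; dividing by $S>0$ yields $\beta = -\mu/\tau^2$, the claimed value. Uniqueness of the minimizer, and the fact that it is a genuine minimum rather than a maximum or inflection point, transfer directly because $\beta\mapsto\beta S$ is an increasing affine bijection of $\bb R$, so it carries over the unique critical point and the strictly positive second-derivative condition established in the proof of Theorem \ref{thm:OptimalGauss}.

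An alternative that avoids the appeal to the earlier theorem is to differentiate $\bb E_{\pi}[R(\Delta,\delta)]$ under the integral sign, giving
\be
\frac{\partial}{\partial\beta}\bb E_{\pi}[R(\Delta,\delta)] = \int \Delta\sqrt{S}\,\phi\!\left( \frac{\Delta - \beta S}{\sqrt{S}} \right)\phi\!\left( \frac{\Delta-\mu}{\tau} \right)\frac{1}{\tau}\,d\Delta,
\ee
and then evaluate this Gaussian integral by completing the square: the product of the two normal factors is a constant times a normal density in $\Delta$ whose mean is the precision-weighted combination of $\beta S$ and $\mu$, so the integral equals that mean times a strictly positive factor, an expression linear in $\beta$ that vanishes at exactly one point. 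There is no real obstacle here; the only care needed is the bookkeeping of the reparametrization --- in particular, remembering that the threshold is applied to the inverse-variance-weighted \emph{sum} $Y$ of \eqref{eq:WtdMean}, not the weighted mean, which is precisely why $S$ cancels and the optimal $\beta$ comes out independent of $S$ --- together with treating the approximate normality in \eqref{eq:WtdMean} as exact for the purposes of the statement, as is done implicitly throughout this section.
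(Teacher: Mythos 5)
Your proposal is correct, and your primary argument is genuinely different from the paper's. The paper proves the result by redoing the Gaussian computation from scratch: it differentiates the Bayes risk under the integral sign, evaluates the resulting Gaussian integral in closed form as $\frac{S^2 \tau (\mu + \beta \tau^2)}{\sqrt{2\pi}(S+\tau^2)^{3/2}} e^{-\frac{(S\beta-\mu)^2}{2(S+\tau^2)}}$, sets it to zero to get $\beta = -\mu/\tau^2$, and defers the second-order check to the analogy with Theorem \ref{thm:OptimalGauss} --- essentially your ``alternative'' route, done explicitly. Your main route instead observes that $\frac{\Delta/S-\beta}{1/\sqrt{S}} = \frac{\Delta-\beta S}{\sqrt{S}}$, so the risk is exactly the Theorem \ref{thm:OptimalGauss} risk under the substitution $(\sigma,\beta)\mapsto(\sqrt{S},\beta S)$, and since Theorem \ref{thm:OptimalGauss} holds for every $\sigma>0$ and $\beta\mapsto\beta S$ is a strictly increasing bijection (as $S>0$), the unique minimizer transfers: $\beta S = -\mu S/\tau^2$, i.e.\ $\beta = -\mu/\tau^2$. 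This reduction is cleaner, avoids repeating the completing-the-square calculation, and makes structurally transparent both why $S$ cancels (because the rule thresholds the weighted \emph{sum} $Y$, as you note) and why the paper's remark that the $n=1$ case recovers Theorem \ref{thm:OptimalGauss} is automatic; the paper's direct computation, in exchange, yields the explicit formula for the derivative of the Bayes risk, which is reusable for the empirical and second-derivative analysis elsewhere in that section. Your bookkeeping (the $1/\tau$ normalization, the sign of the derivative, and treating the approximate normality in \eqref{eq:WtdMean} as exact) is all consistent with what the paper does.
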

\begin{proof}
The Bayes risk is
\be
\bb E_{\pi}[R(\Delta,\delta)] = \int -\Delta \Phi\left( \frac{\Delta/S-\beta}{1/\sqrt{S}} \right) \phi \left( \frac{\Delta-\mu}{\tau} \right) d\Delta
\ee
with $\phi$ the standard Gaussian density, so
\be
\frac{\partial}{\partial \beta} \bb E_{\pi}[R(\Delta,\delta)] &= \int \sqrt{S} \Delta \phi\left( \frac{\Delta/S-\beta}{1/\sqrt{S}} \right)\phi \left( \frac{\Delta-\mu}{\tau} \right) d\Delta \\
&= \frac{S^2 \tau (\mu + \beta \tau^2)}{\sqrt{2 \pi} (S + \tau^2)^{3/2}} e^{-\frac{(S\beta - \mu)^2}{2 (S + \tau^2)}}. 
\ee
Setting equal to zero and solving gives the unique solution
\be
\beta = \frac{-\mu}{\tau^2}.
\ee
The remaining details are similar to the proof of Theorem \ref{thm:OptimalGauss} and are omitted.
\end{proof}
Thus, thresholding the inverse-variance weighted average yields an optimal threshold that is independent of the variances. Notice that when $n=1$, $Y = X/\sigma^2$, and we recover the optimal threshold for $X$ in Theorem \ref{thm:OptimalGauss}. 


In reality, our decision problem is typically whether to ship the $n$th version of the feature having already collected $n-1$ noisy observations of the lifts of previous versions. If $\Delta \sim \No(\mu,\tau^2)$ then we have
\be
p(\Delta \mid x_{1:n},\sigma_{1:n}) &\propto p(\Delta) \prod_{j=1}^{n} p(x_i \mid \Delta,\sigma_{j}^2) \\
&\propto \phi\left( \frac{\Delta-\mu}{\tau} \right) \prod_{j=1}^{n} \phi\left( \frac{x_j-\Delta}{\sigma_{j}} \right)
\ee
so
\be
\Delta & \mid x_{1},\ldots,x_{n-1}, \mu, \tau \sim \No\left( s^{-1} m, s^{-1} \right) \\
s& = \left( \frac1{\tau^2} + \sum_{j=1}^{n-1} \frac{1}{\sigma^2_j} \right) \quad m = \left( \frac{\mu}{\tau^2} + \sum_{j=1}^{n-1} \frac{x_j}{\sigma^2_j} \right),
\ee
and we can immediately apply Theorem \ref{thm:OptimalGauss} to obtain
\be
\beta_{\opt} &= \frac{-\left( \frac1{\tau^2} +\sum_{j=1}^{n-1} \frac{1}{\sigma^2_j} \right)^{-1} \left( \frac{\mu}{\tau^2} + \sum_{j=1}^{n-1} \frac{x_j}{\sigma^2_j} \right) \sigma^2_n}{\left( \frac1{\tau^2} + \sum_{j=1}^{n-1} \frac{1}{\sigma^2_j} \right)^{-1}} \\
&=  -\left( \frac{\mu}{\tau^2} + \sum_{j=1}^{n-1} \frac{x_j}{\sigma^2_j} \right) \sigma^2_n \\
&= \frac{-\mu \sigma^2_n}{\tau^2} - \sigma^2_n \sum_{j=1}^{n-1} \frac{x_j}{\sigma^2_j},
\ee
which is essentially a sum of the optimal threshold for one experiment and the weighted sum of the observations for the past $n-1$ experiments. This turns out to be identical for the optimal threshold if we consider all of the experiments jointly and threshold the inverse variance weighed sum, since
\be
\sum_{j=1}^n \frac{x_j}{\sigma^2_j} = \frac{-\mu}{\tau^2} \Longleftrightarrow x_n = \frac{-\mu \sigma^2_n}{\tau^2} -\sigma^2_n \sum_{j=1}^{n-1}\frac{x_j}{\sigma^2_j},
\ee
so that the only difference is operational: we either apply a threshold to $y$, or we apply a threshold that depends on the weighted sum of the previous $n-1$ experiments only the latest experiment $x_n$. Thus, as data about the lift of a particular feature accumulates, we become more certain about the true value of the lift, and a larger effect size is necessary to convince us that the feature has the opposite effect that previous tests indicated. 

We cannot perform this calculation analytically for the case where the $\Delta$ follow a $t$ distribution, but we can compute the optimal threshold empirically as before. To do this, we fit the model 
\be \label{eq:mod2}
x_{ij} &\sim \No(\Delta_i,\sigma^2_{ij}) \\
\Delta_i &\sim t_{\nu}(\mu,\tau),
\ee
and compute the Bayes risk for thresholding $y_i$ as defined in
\eqref{eq:WtdMean} on a grid of $\beta$ values. The results are shown in Figure 
\ref{fig:OptBetaHier}. The optimal value of $\beta$ is $0.31$, which 
corresponds to a threshold for $x_i$ for a single experiment with $\sigma^2_i = 
0.3$ of $\beta \sigma^2_i = 0.09$ and $p$-value threshold of $0.46$, very 
similar to the optimal threshold in the model where all experiments were 
assigned their own lifts. 

\begin{figure}
\centering
 \includegraphics[width=0.6\textwidth]{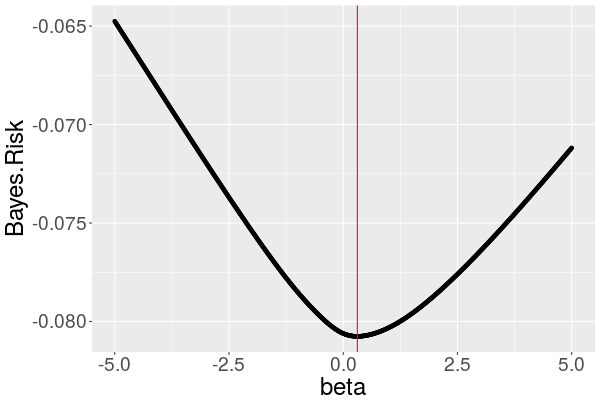}
 \caption{Bayes risk vs $\beta$ for thresholding decision rule $\delta(x) = \bone{y>\beta}$ computed on model in \eqref{eq:mod2}. Horizontal green line at zero risk, vertical red line at optimal $\beta$. } \label{fig:OptBetaHier}
\end{figure}

\section{Discussion}
Traditional A/B testing has treated the decision on whether to ship a feature as a hypothesis test, requiring research and development teams to make an arbitrary choice of a $p$ value threshold at which to adopt a new feature. The decision theoretic approach we outline here has the potential to automate this choice by using data on previous tests to inform about an optimal threshold via Bayesian analysis. We have not considered here the case where modifications to a feature effect the lift, though we do find some evidence of a small average improvement due to modifications at eBay. Extending the decision theoretic analysis of optimal thresholds to this more complicated setting is an interesting extension of the current work.

\subsubsection*{Acknowledgements}

The authors thank Kristian Lum for suggesting consideration of the $n$th experiment after observing $n-1$ experiments in section 5, and generally helpful discussions. We thank David Dunson for useful comments on a draft.


\bibliographystyle{abbrvnat}
\bibliography{abtest-dt}

%
%
%

\end{document}